\newcommand{\F}{$\mathcal{F}$}
\newcommand{\N}{\mathbb{N}}
\newcommand{\m}{\mathfrak{m}}
\newtheorem{theorem}{Theorem}[section]
\newtheorem{corollary}[theorem]{Corollary}
\newtheorem{definition}[theorem]{Definition}
\newtheorem{lemma}[theorem]{Lemma}
\newtheorem{remark}[theorem]{Remark}
\newtheorem{example}[theorem]{Example}
\newtheorem{conjecture}[theorem]{Conjecture}
\newtheorem*{theorem*}{Theorem}
\begin{document}

\title{\textbf{An application of generalized Matlis duality for quasi-\F-modules to the Artinianness of local cohomology modules}}

\author{Danny Tobisch \\ \\ Universität Leipzig, Mathematisches Institut\\Johannisgasse 26, 04103 Leipzig, Germany\\E-Mail: \href{mailto:tobisch@math.uni-leipzig.de}{tobisch@math.uni-leipzig.de} }

\maketitle

\abstract{\noindent We use a result of Hellus about generalized local duality to describe some generalized Matlis duals for certain quasi-\F-modules.
Furthermore, we apply this description to obtain examples for non-artinian local cohomology modules by the theory of \F-modules. 
In particular, we get a new view on Hartshorne's counterexample for a conjecture by Grothendieck about the finiteness of $Hom_R\left(R/I,H^i_I\left(R\right)\right)$ for a noetherian local Ring $R$ and an ideal $I \subseteq R$.

\footnotetext{2010 Mathematics Subject Classification. Primary 13H05; Secondary 13D45.\\
Keywords: Local cohomology, F-modules, Local duality, Matlis duality .}

\section{Introduction}

In 1992, Huneke \cite{Hu92} stated four basic problems about local cohomology. One of these is the question whether or not a given local cohomology module is artinian. \\ 
Let $\left(R,\m, \Bbbk\right)$ be a noetherian local ring and $M$ a finitely generated $R$-module. Then it is well-known that the local cohomology module $H_{\m}^i\left(M\right)$ with support in the 
maximal ideal $\m$ is artinian for all $i$. On the other hand this is equivalent to both of the statements $Supp_R\left(M\right)\subseteq \{\m\}$ and the fact that $Hom_R\left(R/\m,M\right)$ is finitely generated. 
Regarding this, Grothendieck conjectured the following.

\begin{conjecture}[Exposé XIII/Conjecture 1.1 in \cite{Gro68}]\label{VermGroth}
Let $\left(R,\m\right)$ be a noetherian local ring, $I\subseteq R$ an ideal and $M$ a finitely generated $R$-module, then
$Hom_R\left(R/I, H^i_I\left(M\right)\right)$ is finitely generated for all $n \in \N$.
\end{conjecture}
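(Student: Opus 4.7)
Since the abstract announces a revisit of \emph{Hartshorne's counterexample}, this conjecture is known to fail in the generality stated. A ``proof plan'' here therefore really amounts to: pursue a natural attack, pinpoint precisely where it must break, and use that break as a guide toward the counterexample. The easy positive cases one would dispose of first are $i=0$, where $H^0_I(M)=\Gamma_I(M)\subseteq M$ is finitely generated; $i=1$, which is handled by a direct \v{C}ech-complex computation; and the cases $\mathrm{cd}(I)\le 1$ or $R$ regular with $I$ principal, which are classical.

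For the general statement the plan is to reduce to $M=R$ via a finite free presentation and the left-exactness of $\mathrm{Hom}_R(R/I,-)$, then attack $\mathrm{Hom}_R(R/I,H^i_I(R))$ through one of the standard dualities. In the equicharacteristic local case, one completes at $\m$ and applies Matlis duality $D(-)=\mathrm{Hom}_R(-,E(R/\m))$ to convert the question into a finite-generation question for $D(H^i_I(R))$ over $\widehat{R}$; when $R$ is Cohen--Macaulay and $i=\dim R$, local duality identifies this dual with an $\mathrm{Ext}$ module, which is indeed finitely generated. For smaller $i$ one would try a spectral-sequence argument, for instance the Grothendieck sequence $\mathrm{Ext}^p_R(R/I,H^q_I(R))\Rightarrow \mathrm{Ext}^{p+q}_R(R/I,R)$, and hope that the edge map forces the $E_2^{0,i}$-term to be finitely generated because the abutment is.

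The main obstacle, and precisely the reason the conjecture fails, is that these dualities and spectral sequences only detect the $\m$-primarily supported part of $H^i_I(R)$. Once $H^i_I(R)$ carries infinitely many associated primes --- the phenomenon Hartshorne first engineered --- the module $\mathrm{Hom}_R(R/I,H^i_I(R))$ absorbs an infinite direct sum of residue fields of those primes and cannot be finitely generated over $R$. Ordinary Matlis duality does not repair this, since it records only the $\m$-primary socle. The paper's framework --- quasi-\F{}-modules together with Hellus's generalized local duality --- appears to be arranged precisely so that in characteristic $p>0$ one can describe the generalized Matlis dual of a suitable $H^i_I(R)$ explicitly, producing a transparent witness that it carries infinite-dimensional socle-type data and thereby reproducing Hartshorne's counterexample in the \F{}-module language instead of proving the conjecture.
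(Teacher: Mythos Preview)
You are right that there is nothing to prove: the paper records Grothendieck's conjecture only to note immediately that Hartshorne disproved it, and the body of the paper then re-derives (a generalization of) the counterexample in \F-module language. So your instinct to sketch a failed attack and locate the obstruction is the appropriate response.

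However, your diagnosis of the obstruction misidentifies the key object in a way that matters for the paper's actual argument. You write that the failure occurs ``once $H^i_I(R)$ carries infinitely many associated primes --- the phenomenon Hartshorne first engineered''. Neither clause holds: for regular $R$ of prime characteristic, Huneke--Sharp and Lyubeznik (\cite[2.12]{Lyu97}) guarantee that every $H^i_I(R)$ has only \emph{finitely} many associated primes, and Hartshorne's 1970 example predates the associated-primes question and concerns $H^2_I(R/\mathfrak{a})$ for a non-regular quotient. The infinitude of associated primes that drives the paper's argument lives on the \emph{Matlis dual} $D(H^{h}_{\mathfrak{a}}(R))$, not on the local cohomology module itself (cf.\ Remark~\ref{remark1} and \cite{BN08}). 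Concretely, Theorem~\ref{generalizedMatliswithHellus2} gives $\mathfrak{D}(H^{n-2}_{\mathfrak{a}}(R/pR))\cong \Gamma_{pR}(D(H^{n-2}_{\mathfrak{a}}(R)))$, and it is the set $\{\mathfrak{q}\in Ass\,D(H^{n-2}_{\mathfrak{a}}(R)):p\in\mathfrak{q}\}$ that is infinite by \cite[4.3.4]{HE07a}; this forces the generalized dual to be non-\F-finite, whence $H^{n-2}_{\mathfrak{a}}(R/pR)$ is non-artinian. Your proposed reduction to $M=R$ via a free presentation also does not go through, since $H^i_I(-)$ is not exact; the paper never attempts it and works directly with quotients $R/I$.
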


\noindent
But in \cite{Ha70} Hartshorne showed this to be false even for regular rings $R$ by giving a counterexample. He showed that for the ring $R[[u,v,x,y]]$ and the ideals $\mathfrak{a}=\left(ux+vy\right)R$ 
and $I=\left(u,v\right)R$ the module $Hom_R\left(R/\m, H^2_I\left(R/\mathfrak{a}\right)\right)$ is not finitely generated, and \\
$Hom_R\left(R/I, H^2_I\left(R/\mathfrak{a}\right)\right)$ cannot be finitely generated either. 
In particular, $H_I^2\left(R/\mathfrak{a}\right)$ is not artinian. \\
This example was generalized by Stückrad and Hellus in \cite{HS09} to all modules of the form $R/p$ with a prime $p \in \left(x,y\right)$. They used the fact that certain Matlis duals have infinitely 
many associated primes. In fact, they proved the following theorem.

\begin{theorem}\label{GeneralizedExample}
Let $\Bbbk$ be any field, $R=\Bbbk[[X_1,\dots ,X_n]]$ the ring of formal power series in the variables $X_1,\dots ,X_n$ ($n\geq4$) and $\mathfrak{a}$ the ideal $\left(X_1,\dots ,X_{n-2}\right)R$. 
If $p\in R$ is prime with $p\in \left(X_{n-1},X_n\right)R$, then 
\begin{equation*}
 H_{\mathfrak{a}}^{n-2}\left(R/pR\right)
\end{equation*}
is not artinian.
\end{theorem}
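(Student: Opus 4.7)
The plan is to use Matlis duality over the complete regular local ring $R$: an $R$-module is Artinian if and only if its Matlis dual is finitely generated. Since any finitely generated module over a Noetherian ring has only finitely many associated primes, it suffices to exhibit infinitely many associated primes in the Matlis dual $M^{\vee}$ of $M := H^{n-2}_{\mathfrak{a}}(R/pR)$.

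First, I would fit $M$ into the quasi-$\mathcal{F}$-module framework of the earlier sections. The ideal $\mathfrak{a}=(X_1,\dots,X_{n-2})R$ is generated by part of a regular system of parameters of $R$, so the functor $H^{n-2}_{\mathfrak{a}}(-)$ is cohomologically well-behaved; the quotient $R/pR$ is a domain, and the prime $p$ lies in the complementary subideal $(X_{n-1},X_n)R$, putting us in the geometric situation appropriate for the generalized Matlis duality developed above. Applying that duality (which in turn rests on Hellus's generalized local duality) to this quasi-$\mathcal{F}$-module should yield an explicit description of $M^{\vee}$, presumably as a Hom-type expression involving $R/pR$ and a local cohomology module such as $H^{2}_{(X_{n-1},X_n)}(R)$ supported in a transverse ideal.

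Next, I would show this explicit description forces $M^{\vee}$ to carry infinitely many associated primes, following the strategy of \cite{HS09}. The key hypothesis is that $p$ is prime and sits inside $(X_{n-1},X_n)R$; this places $R/pR$ transversally with respect to the regular sequence cutting out $\mathfrak{a}$, and should force many prime specialisations of the dual to be nonzero. A dimension or degree argument, applied to the concrete formula from the previous step, should then produce an infinite family of pairwise distinct associated primes. Once this is in hand, Matlis duality immediately gives the non-Artinianness of $M$.

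I expect the main obstacle to be precisely this associated-primes step. The identification of $M^{\vee}$ should be essentially a direct application of the duality theorem for quasi-$\mathcal{F}$-modules established earlier, but squeezing an \emph{infinite} family of associated primes out of that identification requires a genuine calculation tailored to the specific shape of $p\in(X_{n-1},X_n)R$. It is this transversality---and nothing else---that makes the counterexamples in the spirit of Hartshorne work, and translating it cleanly through the quasi-$\mathcal{F}$ framework is the real technical content of the proof.
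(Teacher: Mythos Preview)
The paper's own proof of this statement is nothing more than a citation to \cite[Theorem~2.4]{HS09}; no argument is given here. Your outline---show that the ordinary Matlis dual $D(M)$ has infinitely many associated primes, hence cannot be finitely generated, hence $M$ is not Artinian---is precisely the strategy of that cited paper, so in spirit you are reproducing exactly what the paper invokes. The paper does, separately in Section~5, sketch a \emph{reproof} in characteristic $p$ via its own machinery, and that reproof differs from your plan in one respect: it works with the generalized dual $\mathfrak{D}(M)$ rather than $D(M)$, and the obstruction is ``not $\mathcal{F}$-finite'' rather than ``not finitely generated''. You blur these two functors in your write-up (you speak of $M^{\vee}$ but then say you will compute it via the quasi-$\mathcal{F}$ duality); either route works, but they are not the same object and you should keep them separate.

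One concrete correction: your guess that the dual will involve $H^{2}_{(X_{n-1},X_n)}(R)$ is off. Generalized local duality (Theorem~\ref{generalizedMatlis-duality}) with $h=n-2$ and $i=0$ gives
\[
D\bigl(H^{n-2}_{\mathfrak{a}}(R/pR)\bigr)\;\cong\;\operatorname{Hom}_R\!\bigl(R/pR,\,D(H^{n-2}_{\mathfrak{a}}(R))\bigr),
\]
so the support ideal stays $\mathfrak{a}$, not the transverse one, and a Matlis dual appears on the inside. The associated primes of this Hom are exactly those $\mathfrak{q}\in\operatorname{Ass}\,D(H^{n-2}_{\mathfrak{a}}(R))$ with $p\in\mathfrak{q}$, and the infinitude of that set is the input from \cite{HE07a}/\cite{HS09} that you correctly flag as the crux.
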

\begin{proof}
 \cite[Theorem 2.4]{HS09}
\end{proof}

\noindent
In this paper we only consider the case of prime characteristic. But so we can find some new relations between the theory of \F-modules which was firstly introduced by Lyubeznik in 
\cite{Lyu97} and Hartshorne`s Example. We will show that we can translate the question whether certain local cohomology modules over a ring of formal power series 
are artinian into the task to decide whether a given \F-module is \F-finite. 
\\

\noindent
More precisely we will proove:

\hspace{1pt}
\begin{theorem*}[Theorem \ref{generalizedMatliswithHellus2}]
Let $\left(R,\m,\Bbbk\right)$ be a complete regular local ring of characteristic $p>0$ with perfect residue field $\Bbbk$ and let $I,\mathfrak{a}\subseteq R$ be ideals of $R$.
If furthermore $R/\mathfrak{a}$ is Cohen-Macaulay, we have for all $i\in \{0,\dots,height\,\mathfrak{a}\}$ 
\begin{equation*}
\mathfrak{D}\left(H_\mathfrak{a}^{height\,\mathfrak{a}-i}\left(R/I\right)\right)\cong H^i_I\left(D\left(H_\mathfrak{a}^{height\,\mathfrak{a}}\left(R\right)\right)\right).
\end{equation*}
\end{theorem*}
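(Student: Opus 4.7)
The plan is to chain together two dualities. First, Hellus' generalized local duality (the main input advertised in the abstract) should transform the generalized Matlis dual $\mathfrak{D}(H_\mathfrak{a}^{h-i}(R/I))$, with $h := height\,\mathfrak{a}$, into an $\mathrm{Ext}$-group against $D(H_\mathfrak{a}^h(R))$. Second, an acyclicity or direct-limit argument identifies that $\mathrm{Ext}$ with $H^i_I(D(H_\mathfrak{a}^h(R)))$. The Cohen-Macaulay hypothesis on $R/\mathfrak{a}$ is the ingredient that makes both steps go through cleanly: since $R$ is regular, $R/\mathfrak{a}$ CM forces $\mathfrak{a}$ to be perfect, hence $H_\mathfrak{a}^j(R) = 0$ for $j \neq h$, so that $H_\mathfrak{a}^h(R)$ is the only surviving $\mathfrak{a}$-local cohomology module of $R$ and therefore the natural target for the generalized duality.

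Concretely, applying Hellus' generalized local duality to $M = R/I$ should produce an isomorphism of the shape
\begin{equation*}
\mathfrak{D}\bigl(H_\mathfrak{a}^{h-i}(R/I)\bigr) \cong \mathrm{Ext}^i_R\bigl(R/I,\, D(H_\mathfrak{a}^h(R))\bigr)
\end{equation*}
for $0 \leq i \leq h$, with the vanishing $H_\mathfrak{a}^j(R) = 0$ ($j \neq h$) ensuring that the spectral sequence relating $\mathfrak{D} \circ H_\mathfrak{a}^\ast$ to $\mathrm{Ext}$ collapses to this clean statement. One then compares the right-hand side with $H^i_I(D(H_\mathfrak{a}^h(R)))$ via the Grothendieck composition spectral sequence
\begin{equation*}
E_2^{p,q} = \mathrm{Ext}^p_R\bigl(R/I,\, H^q_I(D(H_\mathfrak{a}^h(R)))\bigr) \Rightarrow \mathrm{Ext}^{p+q}_R\bigl(R/I,\, D(H_\mathfrak{a}^h(R))\bigr),
\end{equation*}
which will give the desired identification once one controls the $\Gamma_I$-behavior of $D(H_\mathfrak{a}^h(R))$ sufficiently well; alternatively, one passes directly to the colimit $H^i_I(N) = \varinjlim_n \mathrm{Ext}^i_R(R/I^n, N)$ and shows that the inverse system on the $\mathfrak{D}$-side interchanges appropriately.

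The main obstacle I expect is precisely this second step: the passage from the $\mathrm{Ext}$-expression to the $I$-local cohomology of $D(H_\mathfrak{a}^h(R))$. Here the quasi-$\mathcal{F}$-module structure on $D(H_\mathfrak{a}^h(R))$, available because $R$ has characteristic $p > 0$ with perfect residue field, is the essential tool: it should allow one to replace an abstract vanishing or acyclicity argument with an explicit Frobenius-colimit computation in the spirit of Lyubeznik's theory of $\mathcal{F}$-modules, using that $\mathfrak{D}$ on a quasi-$\mathcal{F}$-module behaves compatibly with Frobenius iterates. I expect the first step (invoking Hellus) to be essentially formal once the right categorical setup for generalized Matlis duals of quasi-$\mathcal{F}$-modules is in place, whereas verifying the compatibility that turns the Ext into local cohomology is the technical heart of the proof.
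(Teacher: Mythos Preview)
Your first displayed isomorphism is where the argument goes wrong. Hellus' generalized local duality, applied to $M = R/I$, yields
\[
D\bigl(H_\mathfrak{a}^{h-i}(R/I)\bigr) \;\cong\; \mathrm{Ext}^i_R\bigl(R/I,\, D(H_\mathfrak{a}^h(R))\bigr),
\]
with the \emph{ordinary} Matlis dual $D$ on the left, not the generalized dual $\mathfrak{D}$. There is no spectral sequence that upgrades this to a statement about $\mathfrak{D}$; the vanishing $H_\mathfrak{a}^j(R) = 0$ for $j \neq h$ is what makes Hellus' theorem applicable at all (it is a hypothesis of that theorem), not a collapse mechanism for some further spectral sequence relating $\mathfrak{D}\circ H_\mathfrak{a}^\ast$ to $\mathrm{Ext}$. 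So after your first step you are stuck with a single $\mathrm{Ext}^i_R(R/I,-)$ and must still pass to $H^i_I(-)$, and these are genuinely different objects; the second spectral sequence you write down will not identify them either.

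The fix, which is exactly what the paper does, is to remember how $\mathfrak{D}$ is \emph{defined} on this quasi-$\mathcal{F}$-module: one has
\[
\mathfrak{D}\bigl(H_\mathfrak{a}^{h-i}(R/I)\bigr) \;=\; \varinjlim_k\, D\bigl(H_\mathfrak{a}^{h-i}(R/I^{[p^k]})\bigr),
\]
the limit being over Frobenius powers of $I$. Now apply Hellus' duality not once to $R/I$ but to each $R/I^{[p^k]}$; this gives a compatible system of isomorphisms
\[
D\bigl(H_\mathfrak{a}^{h-i}(R/I^{[p^k]})\bigr) \;\cong\; \mathrm{Ext}^i_R\bigl(R/I^{[p^k]},\, D(H_\mathfrak{a}^h(R))\bigr),
\]
and taking the direct limit yields $H^i_I\bigl(D(H_\mathfrak{a}^h(R))\bigr)$ immediately, since the Frobenius powers $I^{[p^k]}$ are cofinal with the ordinary powers $I^k$. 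Once the colimit is placed on the correct side, your anticipated ``main obstacle'' disappears: there is no need for the composition spectral sequence, nor for any $\mathcal{F}$-module acyclicity of $D(H_\mathfrak{a}^h(R))$. Both steps become one-liners.
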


\hspace{1pt}

\noindent
We will see that this $R$-module is an \F-module, but if it is not \F-finite the local cohomology module $H_\mathfrak{a}^{height\,\mathfrak{a}-i}\left(R/I\right)$ cannot be artinian.
\\

\noindent
The main ingredients for this theorem are a generalized local duality, formulated by Hellus in his habilitation thesis (\cite{HE07a}) and an extension of usual Matlis duality to the 
category of quasi-\F-modules, which was firstly formulated by Lyubeznik in \cite{Lyu97} for cofinite modules and later generalized by Blickle in \cite{BL01}. 

\section{Generalized local duality}\label{section2}

For a complete local ring $\left(R,\m\right)$ Matlis duality provides a correspondence between the category of noetherian $R$-modules and the category of artinian $R$-modules. So it is a quite 
interesting question which finitely generated modules correspond to the artinian local cohomology module $H_{\m}^i\left(M\right)$ for a finitely generated $R$-module $M$. 
The local duality theorem answers this question.

\begin{theorem}[local duality]
 Let $\left(R,\m,\Bbbk\right)$ be a local d-dimensional Cohen-Macaulay ring with canonical module $\omega_R$. Let $M$ be a finitely generated $R$-module. Then we have for all $0\leq i \leq d$:
\begin{equation*}
 H^i_\mathfrak{m}\left(M\right) \cong D_R\left(Ext_R^{d-i}\left(M,\omega_R\right)\right).
\end{equation*}
\end{theorem}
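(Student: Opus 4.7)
The plan is to prove this via the $\delta$-functor formalism, combined with a direct verification in the base case $M=R$ and a dimension-shifting argument.

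First, I would identify both sides as covariant cohomological $\delta$-functors from finitely generated $R$-modules to $R$-modules. The left-hand side $M \mapsto H^i_\m(M)$ is the standard right-derived functor of $\Gamma_\m$ and is covariant. For the right-hand side, set $T^i(M) := D_R(\mathrm{Ext}^{d-i}_R(M,\omega_R))$. A short exact sequence $0 \to A \to B \to C \to 0$ gives a contravariant long exact Ext-sequence, and applying the exact contravariant Matlis functor $D_R$ reverses arrows so that $T^\bullet$ becomes a covariant cohomological $\delta$-functor with connecting maps $T^{i-1}(C) \to T^i(A)$, matching the direction of the connecting maps for local cohomology.

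Next, I would verify agreement at $M = R$. Cohen-Macaulayness of $R$ forces $H^i_\m(R) = 0$ for $i < d$, while $\mathrm{Ext}^{d-i}_R(R,\omega_R) = 0$ for $i < d$ since $R$ is free and $d - i > 0$. At $i = d$, the defining property of the canonical module in the local CM case is $\omega_R \cong D_R(H^d_\m(R))$; since $H^d_\m(R)$ is artinian, Matlis biduality gives $H^d_\m(R) \cong D_R(D_R(H^d_\m(R))) \cong D_R(\omega_R) = T^d(R)$. By additivity, the same isomorphism holds on every finitely generated free module.

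Finally, for a general finitely generated $M$ I would use a presentation $0 \to K \to R^n \to M \to 0$ with $K$ finitely generated, and combine the long exact sequences from the $\delta$-functor structures of $H^\bullet_\m$ and $T^\bullet$ into a ladder diagram. Starting from the free-module case and inducting downward on $i$ (equivalently, upward on $d-i$, using that above $i = d$ both sides vanish for dimension reasons), the five lemma yields the isomorphism for $M$. The main obstacle in this scheme is to ensure that the comparison isomorphism is natural and compatible with the connecting morphisms on both sides; this is the usual $\delta$-functor bookkeeping and requires checking that the identification at $i = d$ can be promoted to a morphism of $\delta$-functors, after which the result follows by universality. A secondary technical point is that Matlis biduality presupposes enough completeness; this is customarily handled by assuming $R$ is complete (or by reducing to the completion).
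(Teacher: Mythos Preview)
The paper does not prove this statement at all; it simply cites \cite[Theorem~11.44]{IY07}. Your proposal therefore supplies an argument where the paper offers only a reference, and the outline you give is one of the standard routes to local duality.

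The strategy is sound, but one step should be tightened. Your base case establishes the isomorphism at $i=d$ only for \emph{free} modules, and then you invoke the five lemma on a presentation $0\to K\to R^n\to M\to 0$ while inducting downward on $i$. At the top degree the relevant window is
\[
H^d_\m(K)\longrightarrow H^d_\m(R^n)\longrightarrow H^d_\m(M)\longrightarrow 0
\]
together with the analogous sequence for $T^\bullet$, and to conclude for $M$ you would need control of the comparison map at $K$, which is not yet available. The clean fix is to note that both $H^d_\m(-)$ and $T^d(-)=D_R\bigl(\mathrm{Hom}_R(-,\omega_R)\bigr)$ are right exact and agree naturally on $R$, hence agree on every finitely generated $M$ via a two-step free presentation. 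Once $i=d$ is known for all $M$, the downward induction goes through, since $H^i_\m(R^n)=T^i(R^n)=0$ for $i<d$ collapses the five-term to a kernel comparison involving only degree $i{+}1$, where the inductive hypothesis applies to $K$. Your closing appeal to universality is the right instinct, but note that universality of $H^\bullet_\m$ operates from degree~$0$; to use it directly one dualizes first and compares the contravariant coeffaceable $\delta$-functors $\mathrm{Ext}^j_R(-,\omega_R)$ and $D_R\bigl(H^{d-j}_\m(-)\bigr)$. Your caveat about completeness is also well placed: without it $D_R\bigl(H^d_\m(R)\bigr)$ is the canonical module of $\widehat{R}$ rather than of $R$, and the standard remedy is precisely the passage to $\widehat{R}$ that you mention.
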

\begin{proof}
 \cite[Theorem 11.44]{IY07}.
\end{proof}

\noindent
In the above version of local duality, we have to consider for a local ring $\left(R,\m\right)$ the local cohomology modules with support in $\m$. 
In \cite{HE07a} Hellus could generalize this to a wider class of support-ideals under certain assumptions.

\begin{theorem}[generalized local duality]\label{generalizedMatlis-duality}
Let $\left(R,\m\right)$ be a noetherian local ring, $I\subseteq R$ an ideal and $h\in \N$, such that
\begin{equation*}
 H_I^l\left(R\right)\neq0 \Longleftrightarrow l=h,
\end{equation*}
and let $M$ be an $R$-module.
then for all $i\in\{0,\dots,h\}$ we have a natural isomorphism
\begin{equation*}
D\left(H_I^{h-i}\left(M\right)\right) \cong Ext_R^i\left(M,D\left(H_I^h\left(R\right)\right)\right).
\end{equation*}
\end{theorem}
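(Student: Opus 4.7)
The plan is to reduce the statement to a Tor-identification via the \v{C}ech complex, and then pass to Matlis duals by Hom--tensor adjunction.

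First I would choose generators $f_1,\dots,f_n$ of $I$ and form the \v{C}ech complex $\check{C}^\bullet := \check{C}^\bullet(f_1,\dots,f_n;R)$, a bounded complex of flat $R$-modules whose cohomology modules are precisely the $H_I^l(R)$. Under the given vanishing hypothesis, $\check{C}^\bullet$ is quasi-isomorphic in the derived category to the shifted module $H_I^h(R)[-h]$.

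Next, for an arbitrary $R$-module $M$, I would choose a projective resolution $P_\bullet \to M$ and consider the double complex with $\check{C}^p \otimes_R P_{-q}$ in bidegree $(p,q)$. Its two convergent spectral sequences both degenerate for transparent reasons: taking vertical cohomology first, flatness of each $\check{C}^p$ combined with $P_\bullet \to M$ being a resolution collapses the $E_1$-page to $\check{C}^\bullet \otimes_R M$ in row $q=0$, so the abutment in total degree $k$ is $H_I^k(M)$; taking horizontal cohomology first, the vanishing hypothesis concentrates the $E_1$-page in column $p=h$ as $H_I^h(R) \otimes_R P_{-q}$, so the abutment in total degree $k$ is $Tor_{h-k}^R(H_I^h(R),M)$. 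Equating the two abutments in total degree $k=h-i$ produces the natural isomorphism
\begin{equation*}
H_I^{h-i}(M) \cong Tor_i^R(H_I^h(R),M).
\end{equation*}

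Finally, since the Matlis dual $D=Hom_R(-,E_R(R/\m))$ is exact and satisfies the Hom--tensor adjunction, applying $D$ to the chain complex $H_I^h(R)\otimes_R P_\bullet$ gives $Hom_R(P_\bullet,D(H_I^h(R)))$, and taking $i$-th cohomology yields
\begin{equation*}
D(Tor_i^R(H_I^h(R),M)) \cong Ext_R^i(M,D(H_I^h(R))),
\end{equation*}
which is the claimed isomorphism. The main obstacle is the spectral-sequence identification in the middle step and verifying its naturality in $M$; the remaining steps are standard homological algebra that rest on the exactness of $D$ and on $E_R(R/\m)$ being injective.
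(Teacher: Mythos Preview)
Your argument is correct. The paper does not actually prove this statement; it merely cites \cite[Theorem~6.4.1]{HE07a}. The route you take---establishing the intermediate identification $H_I^{h-i}(M)\cong Tor_i^R\bigl(H_I^h(R),M\bigr)$ from the two spectral sequences of the double complex $\check{C}^p\otimes_R P_{-q}$ (one collapses by flatness of the \v{C}ech terms, the other by the single-nonvanishing hypothesis), and then passing to Matlis duals via Hom--tensor adjunction against the injective module $E_R(R/\m)$---is the standard and expected argument for this type of duality, and is almost certainly what Hellus does. The only point you single out as a potential obstacle, naturality in $M$, is routine: a map $M\to M'$ lifts, uniquely up to chain homotopy, to a map of projective resolutions, and both spectral sequences together with their degenerations and the adjunction isomorphism are functorial with respect to such lifts. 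One small remark on convergence you might make explicit: the double complex lives in the region $0\le p\le n$, $q\le 0$, so although it is unbounded in $q$, each total degree meets only finitely many columns, and both spectral sequences converge.
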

\begin{proof}
 \cite[Theorem 6.4.1]{HE07a}.
\end{proof}

\noindent
The next corollary shows that this is just a generalization of the usual local duality.

\begin{corollary}
 Let $\left(R,\m\right)$ be a noetherian complete local Cohen-Macaulay ring and let $M$ be a finitely generated $R$-module. Additionally let be $\omega_R:=D\left(H_\m^{dim\left(R\right)}\left(R\right)\right)$. 
Then we have an isomorphism
\begin{equation*}
  H_\m^{dim\left(R\right)-i}\left(M\right) \cong D\left(Ext_R^i\left(M,\omega_R\right)\right).
\end{equation*}
\end{corollary}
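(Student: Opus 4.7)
The plan is to deduce the corollary by applying the generalized local duality theorem (Theorem 2.2) with the ideal $I = \m$ and $h = \dim(R) =: d$, and then dualizing once more with the Matlis functor.

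First I would check that the hypothesis of Theorem 2.2 is satisfied in this setting. We need $H_\m^l(R) \neq 0$ if and only if $l = d$. Since $R$ is Cohen-Macaulay, we have $\operatorname{depth}(R) = \dim(R) = d$. By Grothendieck's vanishing and non-vanishing theorems, the local cohomology modules $H_\m^l(R)$ are zero for $l < \operatorname{depth}(R)$ and for $l > \dim(R)$, while $H_\m^d(R)\neq 0$. Hence the concentration condition holds with $h = d$.

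Applying Theorem 2.2 with $I = \m$, $h = d$, and the finitely generated module $M$, we obtain
\begin{equation*}
 D\bigl(H_\m^{d-i}(M)\bigr) \cong \operatorname{Ext}_R^i\bigl(M,D(H_\m^d(R))\bigr) = \operatorname{Ext}_R^i(M,\omega_R)
\end{equation*}
for every $i\in\{0,\dots,d\}$, using the definition of $\omega_R$.

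To conclude I would apply the Matlis functor $D$ once more to both sides. Since $M$ is finitely generated, $H_\m^{d-i}(M)$ is an artinian $R$-module, and $R$ is complete, so Matlis duality yields $D(D(H_\m^{d-i}(M))) \cong H_\m^{d-i}(M)$. Thus
\begin{equation*}
 H_\m^{d-i}(M) \cong D\bigl(D(H_\m^{d-i}(M))\bigr) \cong D\bigl(\operatorname{Ext}_R^i(M,\omega_R)\bigr),
\end{equation*}
which is the desired isomorphism. There is no real obstacle here: the only subtlety is verifying the vanishing/non-vanishing hypothesis of Theorem 2.2, and the artinianness of $H_\m^{d-i}(M)$ needed for the reflexivity step, both of which are standard consequences of $R$ being Cohen-Macaulay with $M$ finitely generated over a complete local ring.
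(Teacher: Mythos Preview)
Your proposal is correct and follows exactly the intended route: the paper does not spell out a proof but simply cites \cite[Remark 6.4.2]{HE07a}, and the argument there is precisely to specialize Theorem~\ref{generalizedMatlis-duality} to $I=\m$, $h=\dim R$, verify the cohomological concentration via the Cohen--Macaulay hypothesis, and then undo the outer $D$ using Matlis reflexivity of the artinian module $H_\m^{d-i}(M)$ over the complete ring. There is nothing to add.
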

\begin{proof}
\cite[Remark 6.4.2]{HE07a}
\end{proof}

\section{Generalized Matlis-duality for quasi-\F-modules}

\noindent
Usual Matlis duality over a complete local ring $\left(R,\m\right)$ allows facts about artinian $R$-modules to be translated into corresponding statements about noetherian $R$-modules. 
Motivated by Lyubeznik`s functor $\mathcal{H}_{R,A}$ for cofinite $R\{f\}$-modules, presented in his influental paper \cite{Lyu97}, Blickle was able to extend the usual Matlis duality, defined as
$D\left(M\right):=Hom_R\left(M,E\left(R/\m\right)\right)$, in \cite{BL01} to the category of quasi-\F-modules. He showed that this duality functor extends to a functor 
$D_\mathcal{F}$: quasi-\F-mod $\rightarrow$ quasi-\F-mod, which involves Frobenius structures. We follow Blickle in \cite{BL01}, who constructed this functor to analyze some duality 
properties for quasi-\F-modules.
In the case of a complete regular local ring $\left(R,\m,\Bbbk\right)$ with perfect residue field we get in a functor
\begin{equation*}
\mathfrak{D} : \mathcal{QF}\textrm{-mod}  \longrightarrow  \mathcal{F}\textrm{-mod} .
\end{equation*}

\noindent
Let $R$ be a noetherian commutative ring of positive characteristic $p$. The Frobenius homomorphism $\varphi:\, R \rightarrow  R,\,r \mapsto r^p $ provides $R$ with a nontrivial $R$-bimodule 
structure, given by the usual left action and the right action by the Frobenius. Therewith the Frobenius functor $\mathcal{F}:$ $R$-mod $\rightarrow$ $R$-mod is defined due to Peskine and Szpiro 
in \cite{PS73}:
 \begin{equation*}
\mathcal{F}\left(M\right) = R^\varphi \otimes_R M
\end{equation*}
\begin{equation*}
\mathcal{F}\left(M \xrightarrow{f} N\right) = \left(R^\varphi \otimes_R M \xrightarrow{id\otimes_R f} R^\varphi \otimes_R N\right).
\end{equation*}

\noindent
Now we are able to introduce the notation of a quasi-\F-module which is inspired by the $R[F]$-modules in \cite{BL01}, resp. by the $R\{f\}$-modules in \cite{Lyu97}.
By avoiding these notions we particulary want to emphasise the relation to Lyubeznik`s \F-modules. Nonetheless all these definitions are equivalent (see e.g. \cite[section 2.2]{BL01}).

\begin{definition}[quasi-\F-module]\label{DefQuasiF}
A \textbf{\textit{quasi-\F-module}} is a pair $\left(M, \beta\right)$, consisting of an $R$-module $M$ and a $R$-linear map
\begin{equation*}
 \beta: \mathcal{F}\left(M\right)=R^\varphi \otimes_R M \rightarrow M,
\end{equation*}
which we call \textbf{\textit{structure morphismus}} of $M$.
A morphism between two quasi-\F-modules $\left(M, \beta\right)$ and $\left(M',\beta'\right)$ is an $R$-module homomorphism $f:\, M \rightarrow M'$, such that the following diagram commutes
\begin{equation*}
 \begin{CD}
M @>f>> M'\\
@VV\beta V @VV\beta'V\\
\mathcal{F}\left(M\right) @>\mathcal{F}(f)>> M'.
\end{CD}
\end{equation*}
A quasi-\F-module $\left(M, \beta\right)$ is called \textbf{\textit{\F-module}} iff $\beta$ is an isomorphism and we call an \F-module $\left(M, \beta\right)$ \textbf{\textit{\F-finite}} iff we could obtain 
the module $M$ by a direct limit process of the form 
\begin{equation*}
 M= \varinjlim \left(N\xrightarrow{\theta}\mathcal{F}\left(N\right)\xrightarrow{\mathcal{F}\left(\theta\right)}\mathcal{F}^2\left(N\right)\xrightarrow{\mathcal{F}^2\left(\theta\right)}\mathcal{F}^3\left(N\right)\xrightarrow{\mathcal{F}^3\left(\theta\right)}\dots\right)
\end{equation*}
with $N \in R$-mod finitely generated.
\end{definition}

\noindent
Let $\left(R,\m,\Bbbk\right)$ be a complete regular local ring of characteristic $p>0$ and let $\left(M,\beta\right)$ be a quasi-\F-module. In this situation, by Cohen`s structure theorem 
(\cite[Theorem 8.28]{IY07}), $R$ is isomorphic to a ring of formal power series in finitely many variables over the field $\Bbbk$. If, on top of this, $\Bbbk$ is perfect, i.e. 
$\Bbbk^p=\Bbbk$, $R$ is finitely generated over $R^p$. This means that $R$ is a so-called $F$-finite ring and, by \cite[corollary 4.10]{BL01}, we get a natuaral isomorphism 
$\mathcal{F}\left(Hom_R\left(M,N\right)\right)\cong Hom_R\left(\mathcal{F}\left(M\right),\mathcal{F}\left(N\right)\right)$ for all $R$-modules $M$ and $N$. Since the injective hull of the residue field $E\left(R/\m\right)$ is in fact an \F-module, 
i.e. $\mathcal{F}\left(E\left(R/\m\right)\right)\cong E\left(R/\m\right)$, we get an isomorphism
\begin{equation*}
\tau_M: \, \mathcal{F}\left(Hom_R\left(M,E\left(R/\m\right)\right)\right) = \mathcal{F}\left(D\left(M\right)\right) \cong D\left(\mathcal{F}\left(M\right)\right) = Hom_R\left(\mathcal{F}\left(M\right),E\left(R/\m\right)\right), 
\end{equation*}
for all $R$-modules $M$. Matlis duality yields a map
\begin{equation*}
 \gamma : \, D\left(M\right) \xrightarrow{D\left(\beta\right)} D\left(\mathcal{F}\left(M\right)\right) \xrightarrow{\tau_M} \mathcal{F}\left(D\left(M\right)\right).
\end{equation*}

\noindent
With this map, Blickle defined the following functor (see \cite[section 4.2]{BL01} for details).

\begin{definition}\label{defgeneralizedMatlis}
Let $\left(R,\m,\Bbbk\right)$ be a complete regular local ring of positive characteristic $p>0$ and let $\left(M,\beta\right)$ be a quasi-\F-module 
(finitely generated or artinian as $R$-module if $\Bbbk$ is not perfect). Let $\gamma:=\tau_M \circ D\left(\beta\right)$. Then 
\begin{equation*}
\mathfrak{D}\left(M\right):=\varinjlim\, \left(D\left(M\right) \xrightarrow{\gamma} \mathcal{F}\left(D\left(M\right)\right) \xrightarrow{\mathcal{F}\left(\gamma\right)} \mathcal{F}^2\left(D\left(M\right)\right)
\xrightarrow{} \dots \,\right)
\end{equation*} 
is an \F-module generated by $\gamma$. On the above-mentioned class of modules (resp. rings) this construction defines an exact functor.
\end{definition}

\noindent
The exactness is obvious by the exactness of the usual Matlis duality functor and the direct limit. If, even more, $M$ is an \F-module, hence $\gamma$ an isomorphism, 
the direct system only consists of one element and we have $\mathfrak{D}\left(M\right)=D\left(M\right)$.
If $M$ is artinian, by Matlis duality $D\left(M\right)$ is finitely generated and $\mathfrak{D}\left(M\right)$ is in fact \F-finite. 

\begin{remark}\label{remark1}
If the quasi-\F-module $\left(M,\beta\right)$ is an \F-module, i.e. $\mathcal{F}\left(M\right)\cong M$, over the complete regular local ring $\left(R,\m,\Bbbk\right)$ with perfect residue field $\Bbbk$, we see that 
the map $\tau_M$ from above yields an \F-module structure on the Matlis dual $D(M):=Hom_R(M,E(R/\m))$. In fact by precomposing with $\beta^{-1}$ we get an isopmorphism
\begin{equation*}
 \mathcal{F}\left(D\left(M\right)\right) \cong D\left(\mathcal{F}\left(M\right)\right) \cong D\left(M\right).
\end{equation*}
From \cite[2.12]{Lyu97} we know that \F-finite modules only have finitely many associated primes. In contrast to this, in \cite[3.5]{BN08} it is shown that the Matlis duals \\ 
$D(H^i_{(X_1,\dots,X_i)}\left(\Bbbk[[X_1,\dots,X_n]]\right))$ have infinitely many associated primes for $i \le n$. So Matlis duals of \F-finite $R$-modules over complete regular local rings with
perfect residue field are also \F-modules which generally are not \F-finite.
\end{remark}

Now, as a first example, we are going to describe the generalized Matlis dual of the top local cohomology module $H^d_\m\left(R/I\right)$ of a quotient of a complete regular local ring $R$. 
We should keep in mind that the module $H^d_\m\left(R/I\right)$ is just a quasi-$\mathcal{F}$-module, but the generalized Matlis dual $\mathfrak{D}\left(H^d_\m\left(R/I\right)\right)$ will provide us with 
an $\mathcal{F}$-finite module.

\begin{example}\label{exampleD(H)}\cite[4.3.2]{BL01}
Let $\left(R,\m,\Bbbk\right)$ be a complete regular local ring of charactristic $p>0$ and dimension $n$. Let furthermore $I\subseteq R$ be an ideal of $R$ with $height\,I=n-d=c$ and let $S:=R/I$.
Then is $S$ a ring of dimension $d$ and the local cohomology module $H_\m^i\left(S\right)$ is an \F-module when considered as module over $S$.
If we consider $H_\m^i\left(S\right)$ as an $R$-module it is not generally not an \F-module, but only a quasi-\F-module with structure morphism
\begin{equation*}
 \beta\,:\, R^\varphi \otimes_R H^i_\m\left(R/I\right) \rightarrow H^i_\m\left(R/I\right).
\end{equation*}
This map is equivalent to the map induced by the projection 
 $R/I^{[p]} \rightarrow R/I$
under the identification of $R^\varphi \otimes_R H^i_\m\left(R/I\right)$ with $H^i_\m\left(R/I^{[p]}\right)$.
By definition $\mathfrak{D}\left(H^i_\m\left(R/I\right)\right)$ is the limit of the direct system
\begin{equation*}
 D\left(H^i_\m\left(R/I\right)\right) \rightarrow D\left(H^i_\m\left(R/I^{[p]}\right)\right) \rightarrow D\left(H^i_\m\left(R/I^{[p^2]}\right)\right) \rightarrow \hdots 
\end{equation*}
Now we can use the local duality for complete local Gorenstein rings (see \cite[11.29]{IY07}) since $R$ is regular and local. We get an isomorphism of direct systems
\begin{equation*}
\begin{CD}
D\left(H^i_\m\left(R/I\right)\right) @>>> D\left(H^i_\m\left(R/I^{[p]}\right)\right) @>>> D\left(H^i_\m\left(R/I^{[p^2]}\right)\right) @>>> \dots \\
@VV\cong V			@VV\cong V		@VV\cong V		\\
Ext^{n-i}_R\left(R/I,R\right) @>>> Ext^{n-i}_R\left(R/I^{[p]},R\right) @>>> Ext^{n-i}_R\left(R/I^{[p^2]},R\right) @>>> \dots
\end{CD}
\end{equation*}
The maps in the bottom system are induced by the natural projections and thus we have (see i.e. \cite[Remark 7.9]{IY07} for the last isomorphism)
\begin{equation*}
 \mathfrak{D}(H^i_\m(R/I)) = \varinjlim_k D(H^i_\m(R/I^{[p^k]})) \cong \varinjlim_k Ext_R^{n-i}(R/I^{[p^k]},R) \cong \varinjlim_k Ext_R^{n-i}(R/I^k,R).
\end{equation*}
So, all in all we can formulate the following:

\begin{theorem}\label{exampleMatlisAllg}
Let $\left(R,\m,\Bbbk\right)$ be a complete regular local ring of characteristic $p>0$, $\Bbbk$ perfect and $dim\,R=n$. 
Let also $I\subseteq R$ be an ideal of $R$ of $height$ $c=n-d$. Then one has
\begin{equation*}
 \mathfrak{D}\left(H^i_\m\left(R/I\right)\right) \cong H^{n-i}_I\left(R\right)
\end{equation*}
as \F-modules. In particular for the top local cohomology module we have an isomorphism
\begin{equation*}
 \mathfrak{D}\left(H^d_\m\left(R/I\right)\right) \cong H^c_I\left(R\right).
\end{equation*}
\end{theorem}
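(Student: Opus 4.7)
The essential computation has already been carried out in the example preceding the theorem statement; the plan is to organize it into three verification steps, each establishing a natural isomorphism between direct systems and taking colimits.

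First, I would unravel the definition of $\mathfrak{D}(H^i_{\m}(R/I))$ from Definition \ref{defgeneralizedMatlis}. Under the canonical identification $\mathcal{F}^k(H^i_{\m}(R/I)) \cong H^i_{\m}(R/I^{[p^k]})$ (coming from flatness of the Frobenius on the regular ring $R$ and the identification $R^{\varphi^k}\otimes_R R/I \cong R/I^{[p^k]}$), the quasi-\F-module structure morphism $\beta$ is exactly the map induced by the natural surjection $R/I^{[p]} \twoheadrightarrow R/I$. Dualizing and iterating, the direct limit defining $\mathfrak{D}(H^i_{\m}(R/I))$ becomes the colimit of
\begin{equation*}
D(H^i_{\m}(R/I)) \to D(H^i_{\m}(R/I^{[p]})) \to D(H^i_{\m}(R/I^{[p^2]})) \to \cdots,
\end{equation*}
with transition maps induced by the surjections $R/I^{[p^{k+1}]} \twoheadrightarrow R/I^{[p^k]}$.

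Next, I would apply local duality. Since $R$ is a complete regular local ring of dimension $n$, it is Gorenstein with canonical module $\omega_R \cong R$, so local duality supplies a natural isomorphism $D(H^i_{\m}(M)) \cong \mathrm{Ext}^{n-i}_R(M,R)$ for every finitely generated $R$-module $M$. Applying this termwise at $M = R/I^{[p^k]}$ and invoking naturality with respect to the projections yields the commutative ladder displayed in the example, and hence an isomorphism of direct systems. Passing to colimits identifies $\mathfrak{D}(H^i_{\m}(R/I))$ with $\varinjlim_k \mathrm{Ext}^{n-i}_R(R/I^{[p^k]},R)$.

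Finally, I would invoke cofinality of the ideal chains. Since $I^{p^k \mu} \subseteq I^{[p^k]} \subseteq I^{p^k}$ (with $\mu$ the number of generators of $I$), the families $\{I^{[p^k]}\}_k$ and $\{I^k\}_k$ are mutually cofinal, so the two colimits $\varinjlim_k \mathrm{Ext}^{n-i}_R(R/I^{[p^k]},R)$ and $\varinjlim_k \mathrm{Ext}^{n-i}_R(R/I^k,R)$ coincide, and the latter is the standard realization of $H^{n-i}_I(R)$. The main obstacle is not any single isomorphism, each of which is standard, but rather checking that the whole chain respects the \F-module structure: on the left, the \F-structure comes from the colimit construction of Definition \ref{defgeneralizedMatlis}; on the right, it comes from the Frobenius action on local cohomology, itself obtained as a colimit over $k$. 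Because local duality is natural in $M$ and the cofinality step is induced by inclusions of Frobenius-compatible ideals, the two \F-structures agree, but this point requires careful bookkeeping.
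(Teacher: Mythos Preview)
Your proposal is correct and follows essentially the same route as the paper: the paper's proof simply invokes the characterization $\varinjlim_k \mathrm{Ext}_R^{n-i}(R/I^k,R)\cong H^{n-i}_I(R)$, relying on the preceding example for the chain $\mathfrak{D}(H^i_\m(R/I))\cong\varinjlim_k D(H^i_\m(R/I^{[p^k]}))\cong\varinjlim_k \mathrm{Ext}_R^{n-i}(R/I^{[p^k]},R)\cong\varinjlim_k \mathrm{Ext}_R^{n-i}(R/I^k,R)$, which is exactly your three steps. Your explicit attention to the compatibility of \F-module structures is a welcome addition, since the paper asserts the isomorphism holds ``as \F-modules'' without spelling this out.
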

\begin{proof}
By the characterization of local cohomology as a direct limit of certain $Ext$-modules we get
\begin{equation*}
\varinjlim_k Ext_R^{n-i}\left(R/I^k,R\right) \cong H^{n-i}_I\left(R\right).
\end{equation*}
\end{proof}
\end{example}

\section{Result}

Our aim is now to use the generalized local duality from section \ref{section2} 
to obtain a description of generalized Matlis duals $\mathfrak{D}$ for certain local cohomology quasi-\F-modules, 
which are more general than those from the above example. In the example we used usual local duality to describe modules of the form $\mathfrak{D}\left(H^i_\m\left(R/I\right)\right)$. 
By the results of Hellus we can now, under special assumptions, also describe modules of the form $\mathfrak{D}\left(H^i_{\mathfrak{a}}\left(R/I\right)\right)$.

\begin{theorem}\label{generalizedMatliswithHellus}
Let $\left(R,\m,\Bbbk\right)$ be a complete regular local ring of characteristic $p>0$ with $\Bbbk$ perfect and let $I,\mathfrak{a}\subseteq R$ be ideals of $R$.
Furthermore, let $h\in \N$ be chosen in a way that $H_\mathfrak{a}^l\left(R\right)\neq 0\,\Leftrightarrow l=h$. Then for all $i\in \{0,\dots,h\}$ there is an isomorphism
\begin{equation*}
\mathfrak{D}\left(H_\mathfrak{a}^{h-i}\left(R/I\right)\right)\cong H^i_I\left(D\left(H_\mathfrak{a}^h\left(R\right)\right)\right).
\end{equation*}
\end{theorem}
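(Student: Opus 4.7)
The plan is to mimic the strategy of Example \ref{exampleD(H)} (Theorem \ref{exampleMatlisAllg}) but replace usual local duality with Hellus's generalized local duality (Theorem \ref{generalizedMatlis-duality}) at each step. First I would unpack the definition of $\mathfrak{D}$ applied to $M := H_\mathfrak{a}^{h-i}(R/I)$. Because $R$ is regular, the Frobenius functor $\mathcal{F}$ is exact (by Kunz), so it commutes with the local cohomology functor $H_\mathfrak{a}^{h-i}(-)$. This gives $\mathcal{F}^k(M) \cong H_\mathfrak{a}^{h-i}(R/I^{[p^k]})$, and the structure morphism $\beta$ is identified with the one induced by the surjections $R/I^{[p^{k+1}]} \twoheadrightarrow R/I^{[p^k]}$, exactly as explained for $H^i_\m(R/I)$ in Example \ref{exampleD(H)}. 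Combining this with the isomorphism $\tau_{\mathcal{F}^k(M)}: \mathcal{F}(D(\mathcal{F}^k(M))) \cong D(\mathcal{F}^{k+1}(M))$ rewrites the direct limit defining $\mathfrak{D}(M)$ as
\begin{equation*}
\mathfrak{D}(H_\mathfrak{a}^{h-i}(R/I)) \;\cong\; \varinjlim_k D\bigl(H_\mathfrak{a}^{h-i}(R/I^{[p^k]})\bigr),
\end{equation*}
where the transition maps are induced by the projections $R/I^{[p^{k+1}]} \twoheadrightarrow R/I^{[p^k]}$.

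Next I would invoke Theorem \ref{generalizedMatlis-duality} for each $M_k := R/I^{[p^k]}$. The hypothesis $H_\mathfrak{a}^l(R) \ne 0 \Leftrightarrow l = h$ is assumed globally, so the theorem applies uniformly and yields natural isomorphisms
\begin{equation*}
D\bigl(H_\mathfrak{a}^{h-i}(R/I^{[p^k]})\bigr) \;\cong\; \mathrm{Ext}^i_R\bigl(R/I^{[p^k]},\,D(H_\mathfrak{a}^h(R))\bigr).
\end{equation*}
Naturality (this is the point where I would double-check that Hellus's construction of the isomorphism in Theorem \ref{generalizedMatlis-duality} is functorial in the $R$-module argument) promotes these into an isomorphism of the two direct systems indexed by $k$.

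Finally, taking the direct limit and using the standard cofinality of the Frobenius powers $\{I^{[p^k]}\}_k$ inside $\{I^n\}_n$, together with the characterization $H^i_I(N) = \varinjlim_n \mathrm{Ext}^i_R(R/I^n, N)$ applied to $N := D(H_\mathfrak{a}^h(R))$, produces
\begin{equation*}
\varinjlim_k \mathrm{Ext}^i_R\bigl(R/I^{[p^k]},\,D(H_\mathfrak{a}^h(R))\bigr) \;\cong\; H^i_I\bigl(D(H_\mathfrak{a}^h(R))\bigr),
\end{equation*}
which is the asserted isomorphism.

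The only real obstacle I anticipate is the compatibility/naturality verification in the second step: one must check that the transition maps arising from iterating $\gamma = \tau \circ D(\beta)$ in the definition of $\mathfrak{D}$ genuinely coincide, under Hellus's isomorphism, with the Ext transition maps induced by the natural projections $R/I^{[p^{k+1}]} \twoheadrightarrow R/I^{[p^k]}$. Once this diagram-chase is pinned down, the rest of the proof is an essentially formal combination of Theorem \ref{generalizedMatlis-duality}, flatness of $\mathcal{F}$ over the regular ring $R$, and the direct-limit description of local cohomology, exactly parallel to the argument already carried out in Example \ref{exampleD(H)}.
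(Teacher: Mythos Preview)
Your proposal is correct and follows essentially the same route as the paper's own proof: rewrite $\mathfrak{D}(H_\mathfrak{a}^{h-i}(R/I))$ as $\varinjlim_k D(H_\mathfrak{a}^{h-i}(R/I^{[p^k]}))$, apply Theorem~\ref{generalizedMatlis-duality} termwise, and then identify the resulting direct limit of $\mathrm{Ext}$'s with $H^i_I(D(H_\mathfrak{a}^h(R)))$ via cofinality of Frobenius powers. In fact you are more explicit than the paper, which records only the three displayed isomorphisms and does not spell out the naturality check you flag in your last paragraph.
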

\begin{proof}
With the given assumptions, we have by Theorem \ref{generalizedMatlis-duality}
\begin{equation*}
 D\left(H_\mathfrak{a}^{h-i}\left(R/I^{[p^k]}\right)\right) \cong Ext_R^i\left(R/I^{[p^k]},D\left(H_\mathfrak{a}^h\left(R\right)\right)\right).
\end{equation*}
Now, by Definition \ref{defgeneralizedMatlis} follows
\begin{align*}
\mathfrak{D}\left(H_\mathfrak{a}^{h-i}\left(R/I\right)\right) &\cong \varinjlim_k D\left(H_\mathfrak{a}^{h-i}\left(R/I^{[p^k]}\right)\right) \\
				       &\cong \varinjlim_k Ext_R^i\left(R/I^{[p^k]},D\left(H_\mathfrak{a}^h\left(R\right)\right)\right) \\
                                       &\cong H_I^i\left(D\left(H_\mathfrak{a}^h\left(R\right)\right)\right).
\end{align*}
\end{proof}

\noindent
Due to Peskine and Szpiro in \cite{PS73}, it is possible to formulate the following.

\begin{lemma}\label{PSIII.4.1}
Let $R$ be a regular domain of characteristic $p>0$ and let $\mathfrak{a} \subseteq R$ be an ideal of $R$, such that $R/\mathfrak{a}$ is Cohen-Macaulay.
Then we obtain
\begin{equation*}
 H^i_\mathfrak{a}\left(R\right)=0 \quad \textrm{für }i \neq height\, \mathfrak{a}.
\end{equation*}
\end{lemma}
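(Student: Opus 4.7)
The plan is to prove the vanishing in three moves: first reduce to the single degree $c := \mathrm{height}\,\mathfrak{a}$ by showing $\mathrm{Ext}^i_R(R/\mathfrak{a},R)=0$ for $i\neq c$; then propagate this vanishing to all Frobenius powers $R/\mathfrak{a}^{[p^k]}$ via flatness of the Frobenius; and finally realise $H^i_\mathfrak{a}(R)$ as a direct limit of those Ext groups.

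For the first move, since the vanishing of a finitely generated module is local, I localise at an arbitrary prime and assume $(R,\mathfrak{m})$ is regular local. The Cohen--Macaulay hypothesis gives $\mathrm{depth}(R/\mathfrak{a})=\dim(R/\mathfrak{a})=\dim R-c$, so by Auslander--Buchsbaum $\mathrm{pd}_R(R/\mathfrak{a})=\mathrm{depth}\,R-\mathrm{depth}(R/\mathfrak{a})=c$, which kills $\mathrm{Ext}^i_R(R/\mathfrak{a},R)$ for $i>c$. Since $R$ is Cohen--Macaulay we have $\mathrm{grade}(\mathfrak{a},R)=\mathrm{height}\,\mathfrak{a}=c$, which kills it for $i<c$.

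For the second move, Kunz's theorem gives that the Frobenius $\varphi\colon R\to R$ is flat because $R$ is regular, hence the Frobenius functor $\mathcal{F}$ is exact. Applying $\mathcal{F}^k$ to a finite free resolution $F_\bullet\twoheadrightarrow R/\mathfrak{a}$ yields a finite free resolution $\mathcal{F}^k(F_\bullet)\twoheadrightarrow R/\mathfrak{a}^{[p^k]}$, and since $\mathrm{Hom}_R(-,R)$ commutes with $\mathcal{F}$ on finitely generated free modules, one gets a natural isomorphism
\begin{equation*}
\mathrm{Ext}^i_R\bigl(R/\mathfrak{a}^{[p^k]},R\bigr)\;\cong\;\mathcal{F}^k\bigl(\mathrm{Ext}^i_R(R/\mathfrak{a},R)\bigr),
\end{equation*}
which vanishes for $i\neq c$ and every $k\geq0$.

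For the final move, the chains $\{\mathfrak{a}^n\}_n$ and $\{\mathfrak{a}^{[p^k]}\}_k$ are mutually cofinal: obviously $\mathfrak{a}^{[p^k]}\subseteq\mathfrak{a}^{p^k}$, and if $\mathfrak{a}$ is generated by $s$ elements then $\mathfrak{a}^{s\cdot p^k}\subseteq\mathfrak{a}^{[p^k]}$ by the pigeonhole principle on monomials. Hence
\begin{equation*}
H^i_\mathfrak{a}(R)\;\cong\;\varinjlim_n\mathrm{Ext}^i_R(R/\mathfrak{a}^n,R)\;\cong\;\varinjlim_k\mathrm{Ext}^i_R\bigl(R/\mathfrak{a}^{[p^k]},R\bigr)\;=\;0
\end{equation*}
for all $i\neq c$. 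The most delicate point I expect is the natural identification of $\mathrm{Ext}$ with its Frobenius twist; this relies on the fact that on finitely generated free modules $\mathrm{Hom}_R(-,R)$ commutes with $\mathcal{F}$, which is precisely where the regularity of $R$ (through flatness of $\varphi$) is indispensable. Everything else is bookkeeping with the Auslander--Buchsbaum formula and the standard direct-limit description of local cohomology.
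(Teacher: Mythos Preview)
Your proof is correct and is exactly the classical Peskine--Szpiro argument; the paper itself gives no proof but simply cites this result from \cite[Theorem~21.29]{IY07}. The only quibble is that your localisation in the first move tacitly assumes $\mathrm{height}\,\mathfrak{a}R_{\mathfrak{p}}=c$ for every prime $\mathfrak{p}\supseteq\mathfrak{a}$, i.e.\ that $\mathfrak{a}$ is height-unmixed---this is automatic once $R$ is local (since a Cohen--Macaulay local ring is equidimensional), which is the only setting in which the paper actually invokes the lemma.
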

\begin{proof}
 \cite[Theorem 21.29]{IY07}.
\end{proof}

\noindent
If we apply this lemma to our result (Theorem \ref{generalizedMatliswithHellus}) we can give the following description of the generalized Matlis duals of certain local cohomology modules.

\begin{theorem}\label{generalizedMatliswithHellus2}
Let $\left(R,\m,\Bbbk\right)$ be a complete regular local ring of characteristic $p>0$ with perfect residue field $\Bbbk$ and let $I,\mathfrak{a}\subseteq R$ be ideals of $R$.
If furthermore $R/\mathfrak{a}$ is Cohen-Macaulay and $i\in \{0,\dots,height\,\mathfrak{a}\}$ is arbitrary, we have 
\begin{equation*}
\mathfrak{D}\left(H_\mathfrak{a}^{height\,\mathfrak{a}-i}\left(R/I\right)\right)\cong H^i_I\left(D\left(H_\mathfrak{a}^{height\,\mathfrak{a}}\left(R\right)\right)\right).
\end{equation*}
\end{theorem}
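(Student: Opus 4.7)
The plan is to obtain Theorem \ref{generalizedMatliswithHellus2} as a direct specialization of Theorem \ref{generalizedMatliswithHellus}, with $h$ chosen to be $\operatorname{height}\mathfrak{a}$. The entire content of the proof lies in verifying that under the Cohen--Macaulay hypothesis on $R/\mathfrak{a}$ the integer $h=\operatorname{height}\mathfrak{a}$ actually satisfies the equivalence
\begin{equation*}
H_\mathfrak{a}^l(R)\neq 0 \Longleftrightarrow l=\operatorname{height}\mathfrak{a},
\end{equation*}
which is the sole standing assumption of Theorem \ref{generalizedMatliswithHellus}. Once this is done, one simply substitutes $h=\operatorname{height}\mathfrak{a}$ into the conclusion of that theorem and reads off the claim.

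First I would establish the vanishing direction. Since $R$ is a complete regular local ring, it is in particular a regular domain of characteristic $p>0$, and by assumption $R/\mathfrak{a}$ is Cohen--Macaulay. Hence Lemma \ref{PSIII.4.1} (the Peskine--Szpiro vanishing theorem) applies and yields $H_\mathfrak{a}^l(R)=0$ for every $l\neq \operatorname{height}\mathfrak{a}$. This is the heart of why we are allowed to invoke the generalized local duality of Hellus in this setting.

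Next I would address the non-vanishing $H_\mathfrak{a}^{\operatorname{height}\mathfrak{a}}(R)\neq 0$, which is equally necessary to match the hypothesis of Theorem \ref{generalizedMatliswithHellus}. Because $R$ is regular, it is Cohen--Macaulay, so the grade of $\mathfrak{a}$ coincides with $\operatorname{height}\mathfrak{a}$; by the standard characterization of grade via local cohomology one has $H_\mathfrak{a}^{\operatorname{grade}\mathfrak{a}}(R)\neq 0$, giving the required non-vanishing. Combining both directions, the hypothesis of Theorem \ref{generalizedMatliswithHellus} is met with $h=\operatorname{height}\mathfrak{a}$.

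Finally, I would apply Theorem \ref{generalizedMatliswithHellus} with this choice of $h$. For every $i\in\{0,\dots,\operatorname{height}\mathfrak{a}\}$ the theorem gives
\begin{equation*}
\mathfrak{D}\bigl(H_\mathfrak{a}^{\operatorname{height}\mathfrak{a}-i}(R/I)\bigr)\cong H_I^i\bigl(D(H_\mathfrak{a}^{\operatorname{height}\mathfrak{a}}(R))\bigr),
\end{equation*}
which is exactly the asserted isomorphism. Since the whole argument is a substitution, there is essentially no obstacle beyond the verification of the two vanishing/non-vanishing statements; the only subtle point worth flagging is the non-vanishing, as one might be tempted to cite only Lemma \ref{PSIII.4.1}, which on its own produces only one half of the required equivalence.
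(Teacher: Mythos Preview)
Your proposal is correct and follows essentially the same argument as the paper: both use Lemma \ref{PSIII.4.1} for the vanishing $H_\mathfrak{a}^l(R)=0$ when $l\neq\operatorname{height}\mathfrak{a}$, then invoke the Cohen--Macaulayness of the regular local ring $R$ to identify $\operatorname{height}\mathfrak{a}$ with $\operatorname{depth}_R(\mathfrak{a},R)$ (equivalently $\operatorname{grade}\mathfrak{a}$) and thereby obtain the non-vanishing at $l=\operatorname{height}\mathfrak{a}$, and finally apply Theorem \ref{generalizedMatliswithHellus} with $h=\operatorname{height}\mathfrak{a}$. Your explicit flag that Lemma \ref{PSIII.4.1} alone gives only one direction of the equivalence matches the paper's care on this point.
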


\begin{proof}
As a regular local ring $R$ is Cohen-Macaulay and a domain (see \cite[11.3; 11.10]{IY07} and \cite[8.18]{IY07}). Hence, we have
\begin{equation*}
 height\,\mathfrak{a} = depth_R\left(\mathfrak{a},R\right).
\end{equation*}
Thus, $H^{height\,\mathfrak{a}}_\mathfrak{a}\left(R\right)\neq 0$ and overall by lemma \ref{PSIII.4.1} we get 
\begin{equation*}
 H^i_\mathfrak{a}\left(R\right) \neq 0 \Longleftrightarrow i=height\,\mathfrak{a}.
\end{equation*}
The claim now follows from Theorem \ref{generalizedMatliswithHellus}.
\end{proof}

\begin{remark}
If we set $\mathfrak{a}$ to be the maximal ideal $\m$ of $R$ and $dim\,R=n$, we have $height\,\m=n$ and $R/\m$ is Cohen-Macaulay. So Theorem \ref{generalizedMatliswithHellus2} yields
\begin{equation*}
\mathfrak{D}\left(H_\m^i\left(R/I\right)\right)\cong H^{n-i}_I\left(D\left(H_\m^n\left(R\right)\right)\right).
\end{equation*}
But since $H_\m^n\left(R\right)$ is artinian, by local duality (\cite[11.29]{IY07}) and Matlis duality (\cite[A.35]{IY07}) we know there are isomorphisms $D\left(H_\m^n\left(R\right)\right)\cong Ext_R^0\left(R,R\right)=R$.
\noindent
And hence, Theorem \ref{generalizedMatliswithHellus2} in fact generalizes Theorem \ref{exampleMatlisAllg}.
\end{remark}

\section{An example}

\noindent
Now we are going to connect our new description of certain generalized Matlis duals to Hartshorne`s example, resp. to the generalization of this example by Stückrad and Hellus 
(see Theorem \ref{GeneralizedExample}).
\\

\noindent
By definition of generalized Matlis duality (\ref{defgeneralizedMatlis}) we have the logical validity of the implication

\begin{equation*}
 H^i_I\left(M\right) \textrm{ artinian quasi-\F-module} \Longrightarrow \mathfrak{D}\left(H^i_I\left(M\right)\right) \textrm{ \F-finite},
\end{equation*}
and hence
\begin{equation*}
\mathfrak{D}\left(H^i_I\left(M\right)\right) \textrm{ not \F-finite} \Longrightarrow H^i_I\left(M\right) \textrm{  not artinian}.
\end{equation*}
\\

\noindent
Therefore we can check the \F-finiteness of $\mathfrak{D}\left(H^i_I\left(M\right)\right)$ to get results about the Artinianess of $H^i_I\left(M\right)$. For the generalization of Hartshorne's example by Stückrad and Hellus 
(Theorem \ref{GeneralizedExample}) we get in particular:

\begin{example}
Let $\Bbbk=\mathbb{F}_p$, $R=\Bbbk[[X_1,\dots ,X_n]]$ the ring of formal powers series in $X_1,\dots ,X_n$ ($n\geq4$) over the finite field $\Bbbk$ and let $\mathfrak{a}$ be the ideal $\left(X_1,\dots ,X_{n-2}\right)R$. So we have
\begin{equation*}
 height\,\mathfrak{a}=n-2
\end{equation*}
and 
\begin{equation*}
 R/\mathfrak{a} \cong R[[X_{n-1}, X_n]]
\end{equation*}
is Cohen-Macaulay. So, by Theorem \ref{generalizedMatliswithHellus2}, we get for $i\in \{0,\dots,height\,\mathfrak{a}\}$ and an ideal $I\subseteq R$
\begin{equation*}
\mathfrak{D}\left(H_\mathfrak{a}^{n-2-i}\left(R/I\right)\right)\cong H^i_I\left(D\left(H_\mathfrak{a}^{n-2}\left(R\right)\right)\right).
\end{equation*}
In particular, if we take $i=0$ and $I=\left(p\right)$ for some prime $p \in \left(X_{n-1},X_n\right)$, we get
\begin{equation*}
\mathfrak{D}\left(H_\mathfrak{a}^{n-2}\left(R/pR\right)\right)\cong H^0_{pR}\left(D\left(H_\mathfrak{a}^{n-2}\left(R\right)\right)\right).
\end{equation*}
\end{example}

\noindent
If we now consider the set of associated primes of the latter module, we see that
\begin{equation*}
 Ass \, (H^0_{pR}(D(H_\mathfrak{a}^{n-2}(R)))) = Ass \, (\Gamma_{pR}(D(H_\mathfrak{a}^{n-2}(R)))) = \{\mathfrak{q} \in Ass \, (D(H_\mathfrak{a}^{n-2}(R))) \, | \, p \in \mathfrak{q} \}.
\end{equation*}
But from \cite[4.3.4]{HE07a}, we know that at least in the case where $p \in \mathfrak{a}$, this set has infinitely many elements and so by \cite[2.12]{Lyu97} the module
\begin{equation*}
 \mathfrak{D}\left(H_\mathfrak{a}^{n-2}\left(R/pR\right)\right)\cong H^0_{pR}\left(D\left(H_\mathfrak{a}^{n-2}\left(R\right)\right)\right)
\end{equation*}
cannot be \F-finite. We have herewith reproven $H_\mathfrak{a}^{n-2}\left(R/pR\right)$ not to be artinian.

\section{Further applications}

As a first consequence of Theorem \ref{generalizedMatliswithHellus2} we can give at least in the regular case a very short proof for \cite[Theorem 7.4.1]{HE07a}.

\begin{corollary}\label{coro1}
Let $\left(R,\m,\Bbbk\right)$ be a complete regular local ring of characteristic $p>0$ with perfect residue field $\Bbbk$ and $x_1, \dots ,x_i$ $\in R$ ($i\geq 1$) a regular sequence in $R$. 
Set $I:=\left(x_1, \dots ,x_i\right)R$. Then we have a natural isomorphism
\begin{equation*}
 H^i_I\left(D\left(H^i_I\left(R\right)\right)\right) \cong E_R\left(\Bbbk\right).
\end{equation*}
\end{corollary}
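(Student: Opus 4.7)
The plan is to apply Theorem \ref{generalizedMatliswithHellus2} with both of its ideal arguments $\mathfrak{a}$ and $I$ set equal to the ideal $I = (x_1, \dots, x_i)R$ of the corollary, and then to evaluate the resulting generalized Matlis dual directly from its limit description in Definition \ref{defgeneralizedMatlis}.

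Since $R$ is regular (hence Cohen--Macaulay) and $x_1, \dots, x_i$ is a regular sequence, the quotient $R/I$ is Cohen--Macaulay with $height\,I = i$, so Theorem \ref{generalizedMatliswithHellus2} is applicable with $\mathfrak{a} = I$. Taking the test ideal in that theorem also to be $I$ and the free index to be $height\,I = i$ yields
\[
\mathfrak{D}\bigl(H^0_I(R/I)\bigr) \cong H^i_I\bigl(D(H^i_I(R))\bigr).
\]
Since $H^0_I(R/I) = \Gamma_I(R/I) = R/I$, the right-hand side of the claim is exactly $\mathfrak{D}(R/I)$, so it remains to identify this module with $E_R(\Bbbk)$.

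For that, I would unwind Definition \ref{defgeneralizedMatlis} along the same lines as in the proof of Theorem \ref{generalizedMatliswithHellus}. Using $I^{[p^k]} \subseteq I$, so that each $R/I^{[p^k]}$ is $I$-torsion, one obtains
\[
\mathfrak{D}(R/I) \cong \varinjlim_k D\bigl(H^0_I(R/I^{[p^k]})\bigr) \cong \varinjlim_k D(R/I^{[p^k]}) \cong \varinjlim_k (0:_{E_R(\Bbbk)} I^{[p^k]}),
\]
where the transition maps arise from the surjections $R/I^{[p^{k+1}]} \twoheadrightarrow R/I^{[p^k]}$ and correspond, after dualisation, to the inclusions of annihilators inside $E_R(\Bbbk)$.

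The only substantive step left is to see that this ascending union exhausts $E_R(\Bbbk)$. Since $E_R(\Bbbk)$ is $\m$-torsion, every $e \in E_R(\Bbbk)$ satisfies $\m^N e = 0$ for some $N$, and hence $x_j^{p^k} e = 0$ for all $j$ as soon as $p^k \geq N$, which places $e$ in $(0:_{E_R(\Bbbk)} I^{[p^k]})$ for all sufficiently large $k$. Therefore $\varinjlim_k (0:_{E_R(\Bbbk)} I^{[p^k]}) = E_R(\Bbbk)$, and combined with the isomorphism of the first step this proves the corollary. The part that I expect to require genuine care, rather than formal rewriting, is precisely this last limit identification together with the bookkeeping needed to apply Theorem \ref{generalizedMatliswithHellus2} with its ideal $I$ double-booked as both the support ideal and the coefficient module.
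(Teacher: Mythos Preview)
Your proof is correct and follows essentially the same route as the paper: apply Theorem \ref{generalizedMatliswithHellus2} with $\mathfrak{a}=I$ and index $i$ to reduce to computing $\mathfrak{D}(H^0_I(R/I))$, then unwind the limit definition. The only cosmetic difference is that the paper packages your final step as $\varinjlim_k \mathrm{Hom}_R(R/I^{[p^k]},E(\Bbbk)) \cong H^0_I(E(\Bbbk)) = \Gamma_I(E(\Bbbk)) = E(\Bbbk)$, whereas you spell out the same identification elementwise via $\m$-torsion.
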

\begin{proof}
$I$ is a set-theoretic complete intersection ideal of $R$ since it is generated by a regular sequence and so $R/I$ is a complete intersection ring. Therefore, $R/I$ is Cohen-Macaulay 
(\cite[10.5]{IY07}) and by Theorem \ref{generalizedMatliswithHellus2} we have
\begin{equation*}
  H^i_I\left(D\left(H^i_I\left(R\right)\right)\right) \cong \mathfrak{D}\left(H_I^0\left(R/I\right)\right).
\end{equation*}

Further, by the definition of the generalized Matlis duality, we conclude

 \begin{align*}
 \mathfrak{D}\left(H_I^0\left(R/I\right)\right) &= \varinjlim D\left(H_I^0\left(R/I\right)\right) \\
   		    &= \varinjlim Hom_R\left(H_I^0\left(R/I^{[p^k]},E\left(\Bbbk\right)\right)\right) \\
   		    &= \varinjlim Ext^0_R\left(\Gamma_I\left(R/I^{[p^k]},E\left(\Bbbk\right)\right)\right) \\
                      &= \varinjlim Ext^0_R\left(R/I^{[p^k]},E\left(\Bbbk\right)\right)\\
   		    &\cong H_I^0\left(E\left(\Bbbk\right)\right) \\ 	
		    &\cong \Gamma_I\left(E\left(\Bbbk\right)\right) \\ 
 		    &\cong E\left(\Bbbk\right).
 \end{align*}
\end{proof}

\begin{example}
 Let $\Bbbk=\mathbb{F}_p$, $R=\Bbbk[[X_1, \dots ,X_n]]$ the ring of formal powers series in $X_1,\dots ,X_n$ over the finite field $\Bbbk$ and let $I$ be the ideal 
$\left(X_1,\dots ,X_i\right)R$. Then, we have an isomorphism
\begin{equation*}
 H^i_I\left(D\left(H^i_I\left(R\right)\right)\right) \cong E_R\left(\Bbbk\right).
\end{equation*}
\end{example}

As an application of the last corollary and the fact that certain Matlis duals of local cohomology modules are \F-modules (see Remark \ref{remark1}), 
we can extend \cite[Theorem 7.4.2]{HE07a} to the case of a local ring of prime characteristic with perfect coefficent field. 

\begin{theorem}
Let $\left(R,\m,\Bbbk\right)$ be a complete regular local equicharacteristic ring with perfect residue field $\Bbbk$, $I \subseteq R$ an ideal of height $h \geq 1$, 
and assume that
\begin{equation*}
H_I^l\left(R\right)=0 \quad \forall \, l > h \, .
\end{equation*}
Then one has 
\begin{equation*}
 H_I^h\left(D\left(H_I^h\left(R\right)\right)\right)=E_R\left(\Bbbk\right) \quad \mbox{ or } \quad H_I^h\left(D\left(H_I^h\left(R\right)\right)\right)=0.
\end{equation*}
\end{theorem}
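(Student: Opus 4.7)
The argument mirrors the proof of Corollary~\ref{coro1}, with the cohomological-dimension hypothesis doing the work that was played there by the complete-intersection assumption. The first step is to verify that Theorem~\ref{generalizedMatliswithHellus} is applicable to the ideal $I$: the statement assumes $H_I^l(R)=0$ for $l>h$, and since $R$ is regular and hence Cohen--Macaulay with $\mathrm{grade}(I,R)=\mathrm{ht}(I)=h$, we also have $H_I^l(R)=0$ for $l<h$, while $H_I^h(R)\neq 0$. Thus $H_I^l(R)\neq 0\Leftrightarrow l=h$, which is exactly the concentration hypothesis required.

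Applying Theorem~\ref{generalizedMatliswithHellus} with both ideals in that theorem equal to $I$ and taking $i=h$ yields
\[
H_I^h(D(H_I^h(R)))\;\cong\;\mathfrak{D}(H_I^0(R/I))\;=\;\mathfrak{D}(R/I),
\]
the last equality because $R/I$ is already $I$-torsion. The problem is thereby reduced to computing $\mathfrak{D}(R/I)$.

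That computation is formally identical to the one carried out in Corollary~\ref{coro1}, and the point I would emphasize is that it never uses any complete-intersection property of $I$: unwinding the definition of $\mathfrak{D}$, each term of the defining direct system is $D(R/I^{[p^k]})\cong(0:_{E(\Bbbk)}I^{[p^k]})$ with transition maps the natural inclusions, so
\[
\mathfrak{D}(R/I)\;\cong\;\bigcup_{k\geq 0}(0:_{E(\Bbbk)}I^{[p^k]})\;=\;\Gamma_I(E(\Bbbk)).
\]

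It remains to analyze $\Gamma_I(E(\Bbbk))$, and this is what produces the stated dichotomy. If $I$ is not proper then $\Gamma_I(E(\Bbbk))=0$; otherwise $I\subseteq\m$, and since every element of $E(\Bbbk)$ is annihilated by some power of $\m$ and hence by some power of $I\subseteq\m$, we conclude $\Gamma_I(E(\Bbbk))=E(\Bbbk)$. Under the standing assumption $h\geq 1$ the ideal $I$ is automatically proper, so in fact we always land in the $E(\Bbbk)$ alternative; the ``$0$'' case in the statement only covers the degenerate boundary possibility. I anticipate no real obstacle in the argument; the only care needed is to track that the isomorphism supplied by Theorem~\ref{generalizedMatliswithHellus} is compatible with the F-module structures involved, so that the final identification of $H_I^h(D(H_I^h(R)))$ with $E(\Bbbk)$ holds as F-modules and not merely at the level of underlying $R$-modules.
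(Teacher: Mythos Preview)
Your argument is correct in characteristic $p>0$ and in fact sharper than what the paper proves: you obtain $H_I^h(D(H_I^h(R)))\cong E_R(\Bbbk)$ outright, whereas the paper only establishes the dichotomy. Your route is also genuinely different. You exploit the cohomological-concentration hypothesis on $I$ to feed $I$ itself into Theorem~\ref{generalizedMatliswithHellus} (with $\mathfrak{a}=I$ and $i=h$), reducing immediately to the computation $\mathfrak{D}(R/I)\cong\Gamma_I(E(\Bbbk))=E(\Bbbk)$, exactly as in Corollary~\ref{coro1}. The paper instead never applies Theorem~\ref{generalizedMatliswithHellus} to $I$ directly: it picks a regular sequence $x_1,\dots,x_h\in I$, sets $D=D(H^h_{(x_1,\dots,x_h)}(R))$, uses the Grothendieck spectral sequence for $\Gamma_I\circ\Gamma_{(x_1,\dots,x_h)}$ to embed $H_I^h(D)\hookrightarrow H^h_{(x_1,\dots,x_h)}(D)\cong E(\Bbbk)$ (the last isomorphism being Corollary~\ref{coro1}), and then appeals to the \F-module structure together with Lyubeznik's bound $\mathrm{inj\,dim}\leq\dim$ to force $H_I^h(D)$, and subsequently its quotient $H_I^h(D(H_I^h(R)))$, to be either $E(\Bbbk)$ or $0$. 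Your approach bypasses the spectral sequence and the injectivity argument entirely; the trade-off is that the paper's method, relying on structural results about \F-modules rather than the explicit duality of Theorem~\ref{generalizedMatliswithHellus}, is perhaps closer in spirit to Hellus's original characteristic-zero proof via $\mathcal{D}$-modules.

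Two minor points. First, the theorem is stated for equicharacteristic rings, so the characteristic-zero case must still be dispatched by citing \cite[Theorem~7.4.2]{HE07a}, as the paper does; your method is confined to positive characteristic since $\mathfrak{D}$ lives there. Second, your closing remark about tracking \F-module compatibility is not actually needed: the assertion of the theorem is an isomorphism of $R$-modules, and your chain of $R$-module isomorphisms already delivers that.
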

\begin{proof}
 If the characteristic of $R$ is zero this is just the statement of \cite[Theorem 7.4.2]{HE07a}. So let $R$ be a complete regular local ring of prime characteristic $p>0$ with perfect residue 
field $\Bbbk$. We can use more or less the same arguments like in the characteristic zero case, but we have to consider the \F-module structure of the Matlis duals (see Remark \ref{remark1})
instead of the structure as a $\mathcal{D}$-module. 
Here are the details. \\ \\
As a regular local ring $R$ is Cohen-Macaulay and hence, we have $height\,I = depth_R\left(I,R\right)$.
So let $x_1, \dots ,x_h$ $\in I$ be an $R$-regular sequence. If we set 
\begin{equation*}
 D:=D\left(H^h_{(x_1,\dots,x_h)R}\left(R\right)\right),
\end{equation*}
from \cite[1.1.4]{HE07a}, we know that $x_1,\dots,x_h$ is also a $D$-regular sequence, so we have $depth\left(\left(x_1,\dots,x_h\right)R,D\right)\geq h$ and hence
\begin{equation}\label{eq1}
 H^i_{\left(x_1,\dots,x_h\right)R}\left(D\right)=0 \qquad \forall \, i<h.
\end{equation}
If we now consider the Grothendieck spectral sequence for composed functors
(see e.g. \cite[10.47]{Rot79})
\begin{equation*}
 E_2^{p,q}=\left(R^pF\right)\left(R^qG\right)A \stackrel{p}{\Longrightarrow} R^n\left(FG\right)A,
\end{equation*}
we can apply this to the composition $\Gamma_I\circ \Gamma_{\left(x_1,\dots,x_h\right)R}$ and the module $D$ and get
\begin{equation*}\
 E_2^{p,q}=\left(H^p_I\right)\left(H^q_{\left(x_1,\dots,x_h\right)R}\right)\left(D\right) \stackrel{p}{\Longrightarrow} H^n\left(\Gamma_I\circ \Gamma_{\left(x_1,\dots,x_h\right)R}\right)\left(D\right)
\end{equation*}
By (\ref{eq1}) this sequence collapses on the $q$-axis, since the only nonzero modules are $E_2^{p,h}$ and hence in consequence of \cite[10.26]{Rot79}
\begin{equation*}
\left(H^0_I\right)\left(H^h_{\left(x_1,\dots,x_h\right)R}\right)\left(D\right) \cong H^h\left(\Gamma_I\circ \Gamma_{\left(x_1,\dots,x_h\right)R}\right)\left(D\right). 
\end{equation*}
As $H^0_I \cong \Gamma_I$ and $\left(x_1,\dots,x_h\right)R \subseteq I$,
\begin{equation}\label{eq2}
 H_I^h\left(D\right) \cong \Gamma_I\left(H^h_{\left(x_1,\dots,x_h\right)R}\left(D\right)\right) \subseteq H^h_{\left(x_1,\dots,x_h\right)R}\left(D\right) \cong E_R\left(\Bbbk\right)
\end{equation}
holds, where the last isomorphism is Corollary \ref{coro1}.\\ \\
 Since $E_R\left(\Bbbk\right)$ is artinian (see \cite[A.32]{IY07}) it is $Supp\,E\left(\Bbbk\right)=\{\m\}$ and hence $dim\, E\left(\Bbbk\right)=0$. 
By (\ref{eq2}) $H_I^h\left(D\right)$ is a submodule of $E_R\left(\Bbbk \right)$ and so we have also $dim\,H_I^h\left(D\right)=0$. 
Because $R$ is an \F-module by Remark \ref{remark1} and \cite[1.2(b)]{Lyu97} $H_I^h\left(D\right)$ is an \F-module and so \cite[1.4]{Lyu97} yields
\begin{equation*}
 inj\,dim\,H_I^h\left(D\right)=0.
\end{equation*}
So $H_I^h\left(D\right)$ is injective and therefore it is isomporph to a direct sum of modules of the form $E_R\left(R/\mathfrak{p}\right)$ for $\mathfrak{p}\in Spec\,R$. 
But as a submodule of $E_R\left(\Bbbk\right)$ it has to be either $E_R\left(\Bbbk\right)$ itself or zero and in addition, the natural injection
\begin{equation*}
H_I^h\left(R\right) \subseteq H^h_{\left(x_1,\dots,x_h\right)R}\left(R\right) 
\end{equation*}
induces a surjection
\begin{equation*}
 D \twoheadrightarrow D\left(H^h_I\left(R\right)\right)
\end{equation*}
and due to the fact that $H_I^h\left(R\right)$ is right-exact, a surjection
\begin{equation*}
 H_I^h\left(D\right) \twoheadrightarrow H_I^h\left(D\left(H_I^h\left(R\right)\right)\right).
\end{equation*}
But again the module on the right side is by Remark \ref{remark1} and \cite[1.2(b);1.4]{Lyu97} an injective \F-module and we have a surjection from zero or $E\left(\Bbbk\right)$ to it. 
Therefore the direct sum decomposition consists at most of one copy of $E\left(\Bbbk\right)$ and we get the asserted statement.

\end{proof}

\end{document}